\theoremstyle{definition}
\newtheorem{conjecture}{Conjecture}
\theoremstyle{plain}
\newtheorem{proposition}{Proposition}
\newtheorem{theorem}{Theorem}
\newlist{abbrv}{itemize}{1}
\setlist[abbrv,1]{label=,labelwidth=0.9in,align=parleft,noitemsep,leftmargin=!}
\newcommand{\R}{\mathbb{R}}
\newcommand{\rv}[1]{\boldsymbol{#1}}
\newcommand{\ub}[1]{\overline{#1}}
\newcommand{\lb}[1]{\underline{#1}}
\newcommand{\geo}[1]{\mathtt{#1}}
\DeclareMathOperator{\subj}{s.t.}
\title{Largest small polygons: A sequential convex optimization approach}
\author{Christian Bingane\thanks{D\'{e}partement de math\'{e}matiques et de g\'{e}nie industriel, Polytechnique Montr\'{e}al, Montreal, Quebec, Canada, H3C~3A7. Email: \url{christian.bingane@polymtl.ca}}}
\begin{document}
\maketitle
\begin{abstract}
A small polygon is a polygon of unit diameter. The maximal area of a small polygon with $n=2m$ vertices is not known when $m\ge 7$. Finding the largest small $n$-gon for a given number $n\ge 3$ can be formulated as a nonconvex quadratically constrained quadratic optimization problem. We propose to solve this problem with a sequential convex optimization approach, which is an ascent algorithm guaranteeing convergence to a locally optimal solution. Numerical experiments on polygons with up to $n=128$ sides suggest that the optimal solutions obtained are near-global. Indeed, for even $6 \le n \le 12$, the algorithm proposed in this work converges to known global optimal solutions found in the literature.
\end{abstract}
\paragraph{Keywords} Convex geometry, polygons, isodiametric problem, maximal area, quadratically constrained quadratic optimization, sequential convex optimization, concave-convex procedure

\section{Introduction}
The {\em diameter} of a polygon is the largest Euclidean distance between pairs of its vertices. A polygon is said to be {\em small} if its diameter equals one. For a given integer $n \ge 3$, the maximal area problem consists in finding a small $n$-gon with the largest area. The problem was first investigated by Reinhardt~\cite{reinhardt1922} in 1922. He proved that
\begin{itemize}
\item when $n$ is odd, the regular small $n$-gon is the unique optimal solution;
\item when $n=4$, there are infinitely many optimal solutions, including the small square;
\item when $n \ge 6$ is even, the regular small $n$-gon is not optimal.
\end{itemize}

When $n \ge 6$ is even, the maximal area problem is solved for $n \le 12$. In 1961, Bieri~\cite{bieri1961} found the largest small $6$-gon, assuming the existence of an axis of symmetry. In 1975, Graham~\cite{graham1975} independently constructed the same $6$-gon, represented in Figure~\ref{figure:6gon:U6}. In 2002, Audet, Hansen, Messine, and Xiong~\cite{audet2002} combined Graham's strategy with global optimization methods to find the largest small $8$-gon, illustrated in Figure~\ref{figure:8gon:U8}. In 2013, Henrion and Messine~\cite{henrion2013} found the largest small $10$- and $12$-gons by also solving globally a nonconvex quadratically constrained quadratic optimization problem. They also found the largest small axially symmetrical $14$- and $16$-gons. In 2017, Audet~\cite{audet2017} showed that the regular small polygon has the maximal area among all equilateral small polygons. In 2021, Audet, Hansen, and Svrtan~\cite{audet2021} determined analytically the largest small axially symmetrical $8$-gon.

The diameter graph of a small polygon is the graph with the vertices of the polygon, and an edge between two vertices exists only if the distance between these vertices equals one. Graham~\cite{graham1975} conjectured that, for even $n \ge 6$, the diameter graph of a small $n$-gon with maximal area has a cycle of length $n-1$ and one additional edge from the remaining vertex. The case $n=6$ was proven by Graham himself~\cite{graham1975} and the case $n=8$ by Audet, Hansen, Messine, and Xiong~\cite{audet2002}. In 2007, Foster and Szabo~\cite{foster2007} proved Graham's conjecture for all even $n \ge 6$. Figure~\ref{figure:4gon}, Figure~\ref{figure:6gon}, and Figure~\ref{figure:8gon} show diameter graphs of some small polygons. The solid lines illustrate pairs of vertices which are unit distance apart.

In addition to exact results and bounds, uncertified largest small polygons have been obtained both by metaheurisitics and nonlinear optimization. Assuming Graham's conjecture and the existence of an axis of symmetry, Mossinghoff~\cite{mossinghoff2006b} in 2006 constructed large small $n$-gons for even $6 \le n \le 20$. In 2020, using a formulation based on polar coordinates, Pint{\'e}r~\cite{pinter2020} presented numerical solutions estimates of the maximal area for even $6 \le n \le 80$. The polar coordinates-based formulation was recently used by Pint{\'e}r, Kampas, and Castillo~\cite{pinter2022} to obtain estimates of the maximal area for even $6 \le n \le 1000$.

The maximal area problem can be formulated as a nonconvex quadratically constrained quadratic optimization problem. In this work, we propose to solve it with a sequential convex optimization approach, also known as the concave-convex procedure~\cite{marks1978,sriperumbudur2009}. This approach is an ascent algorithm guaranteeing convergence to a locally optimal solution. Numerical experiments on polygons with up to $n=128$ sides suggest that the optimal solutions obtained are near-global. Indeed, without assuming Graham's conjecture nor the existence of an axis of symmetry in our quadratic formulation, optimal $n$-gons obtained with the algorithm proposed in this work verify both conditions within the limit of numerical computations. Moreover, for even $6 \le n \le 12$, this algorithm converges to known global optimal solutions. The algorithm is implemented as a package, OPTIGON, which is available on GitHub~\cite{optigon}.

The remainder of this paper is organized as follows. In Section~\ref{sec:ngon}, we recall principal results on largest small polygons. Section~\ref{sec:nqcqo} presents the quadratic formulation of the maximal area problem and the sequential convex optimization approach to solve it. We report in Section~\ref{sec:results} computational results. Section~\ref{sec:conclusion} concludes the paper.

\begin{figure}[h]
	\centering
	\subfloat[$(\geo{R}_4,0.5)$]{
		\begin{tikzpicture}[scale=4]
		\draw[dashed] (0,0) -- (0.5000,0.5000) -- (0,1) -- (-0.5000,0.5000) -- cycle;
		\draw (0,0) -- (0,1);
		\draw (0.5000,0.5000) -- (-0.5000,0.5000);
		\end{tikzpicture}
	}
	\subfloat[$(\geo{R}_3^+,0.5)$]{
		\begin{tikzpicture}[scale=4]
		\draw[dashed] (0.5000,0.8660) -- (0,1) -- (-0.5000,0.8660);
		\draw (0,1) -- (0,0) -- (0.5000,0.8660) -- (-0.5000,0.8660) -- (0,0);
		\end{tikzpicture}
	}
	\caption{Two small $4$-gons $(\geo{P}_4,A(\geo{P}_4))$}
	\label{figure:4gon}
\end{figure}
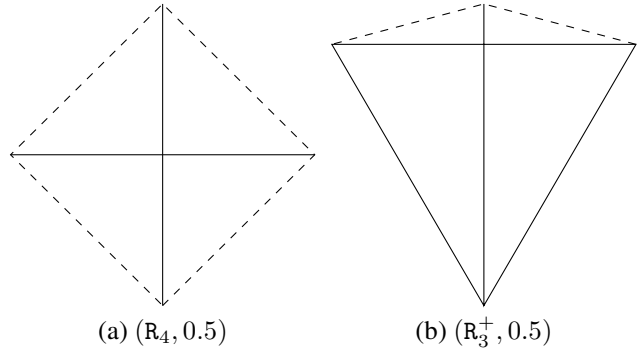

\begin{figure}[h]
	\centering
	\subfloat[$(\geo{R}_6,0.649519)$]{
		\begin{tikzpicture}[scale=4]
		\draw[dashed] (0,0) -- (0.4330,0.2500) -- (0.4330,0.7500) -- (0,1) -- (-0.4330,0.7500) -- (-0.4330,0.2500) -- cycle;
		\draw (0,0) -- (0,1);
		\draw (0.4330,0.2500) -- (-0.4330,0.7500);
		\draw (0.4330,0.7500) -- (-0.4330,0.2500);
		\end{tikzpicture}
	}
	\subfloat[$(\geo{R}_5^+,0.672288)$]{
		\begin{tikzpicture}[scale=4]
		\draw[dashed] (0,0) -- (0.5000,0.3633) -- (0.3090,0.9511) -- (0,1) -- (-0.3090,0.9511) -- (-0.5000,0.3633) -- cycle;
		\draw (0,1) -- (0,0) -- (0.3090,0.9511) -- (-0.5000,0.3633) -- (0.5000,0.3633) -- (-0.3090,0.9511) -- (0,0);
		\end{tikzpicture}
	}
	\subfloat[$(\geo{P}_6^*,0.674981)$]{
		\begin{tikzpicture}[scale=4]
		\draw[dashed] (0,0) -- (0.5000,0.4024) -- (0.3438,0.9391) -- (0,1) -- (-0.3438,0.9391) -- (-0.5000,0.4024) -- cycle;
		\draw (0,1) -- (0,0) -- (0.3438,0.9391) -- (-0.5000,0.4024) -- (0.5000,0.4024) -- (-0.3438,0.9391) -- (0,0);
		\end{tikzpicture}
		\label{figure:6gon:U6}
	}
	\caption{Three small $6$-gons $(\geo{P}_6,A(\geo{P}_6))$}
	\label{figure:6gon}
\end{figure}
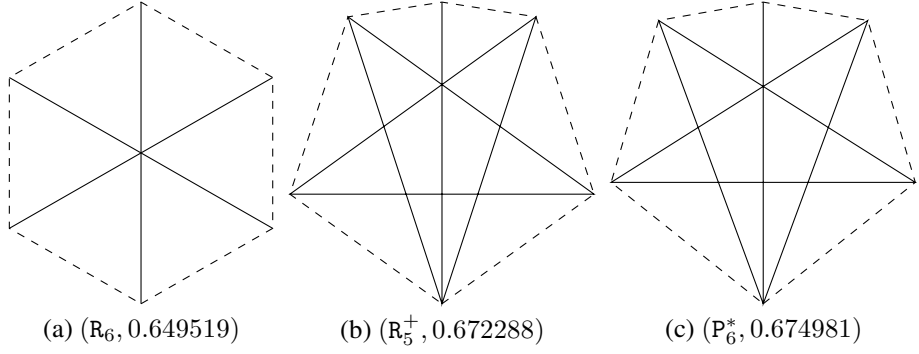

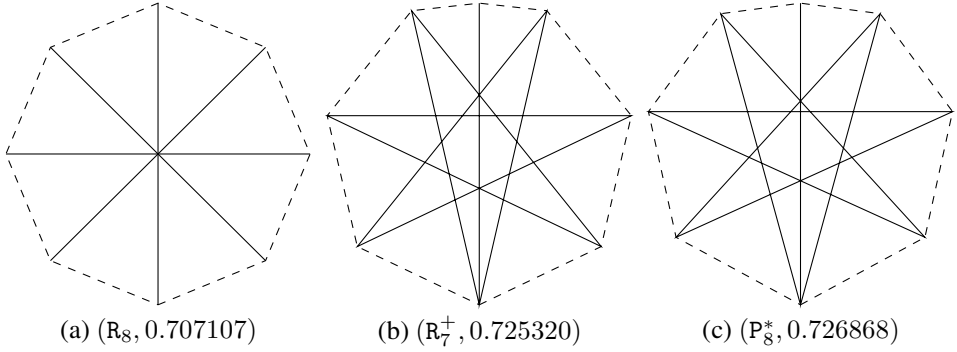
\begin{figure}[h]
	\centering
	\subfloat[$(\geo{R}_8,0.707107)$]{
		\begin{tikzpicture}[scale=4]
		\draw[dashed] (0,0) -- (0.3536,0.1464) -- (0.5000,0.5000) -- (0.3536,0.8536) -- (0,1) -- (-0.3536,0.8536) -- (-0.5000,0.5000) -- (-0.3536,0.1464) -- cycle;
		\draw (0,0) -- (0,1);
		\draw (0.3536,0.1464) -- (-0.3536,0.8536);
		\draw (0.5000,0.5000) -- (-0.5000,0.5000);
		\draw (0.3536,0.8536) -- (-0.3536,0.1464);
		\end{tikzpicture}
	}
	\subfloat[$(\geo{R}_7^+,0.725320)$]{
		\begin{tikzpicture}[scale=4]
		\draw[dashed] (0,0) -- (0.4010,0.1931) -- (0.5000,0.6270) -- (0.2225,0.9749) -- (0,1) -- (-0.2225,0.9749) -- (-0.5000,0.6270) -- (-0.4010,0.1931) -- cycle;
		\draw (0,1) -- (0,0) -- (0.2225,0.9749) -- (-0.4010,0.1931) -- (0.5000,0.6270) -- (-0.5000,0.6270) -- (0.4010,0.1931) -- (-0.2225,0.9749) -- (0,0);
		\end{tikzpicture}
	}
	\subfloat[$(\geo{P}_8^*,0.726868)$]{
		\begin{tikzpicture}[scale=4]
		\draw[dashed] (0,0) -- (0.4091,0.2238) -- (0.5000,0.6404) -- (0.2621,0.9650) -- (0,1) -- (-0.2621,0.9650) -- (-0.5000,0.6404) -- (-0.4091,0.2238) -- cycle;
		\draw (0,1) -- (0,0) -- (0.2621,0.9650) -- (-0.4091,0.2238) -- (0.5000,0.6404) -- (-0.5000,0.6404) -- (0.4091,0.2238) -- (-0.2621,0.9650) -- (0,0);
		\end{tikzpicture}
		\label{figure:8gon:U8}
	}
	\caption{Three small $8$-gons $(\geo{P}_8,A(\geo{P}_8))$}
	\label{figure:8gon}
\end{figure}

\section{Largest small polygons}\label{sec:ngon}
Let $A(\geo{P})$ denote the area of a polygon $\geo{P}$. Let $\geo{R}_n$ denote the regular small $n$-gon. We have
\[
A(\geo{R}_n) =
\begin{cases}
\frac{n}{2}\left(\sin \frac{\pi}{n} - \tan \frac{\pi}{2n}\right) &\text{if $n$ is odd,}\\
\frac{n}{8}\sin \frac{2\pi}{n} &\text{if $n$ is even.}\\
\end{cases}
\]
We remark that $A(\geo{R}_n) < A(\geo{R}_{n-1})$ for all even $n\ge 6$~\cite{audet2009a}. This suggests that $\geo{R}_n$ does not have maximum area for any even $n\ge 6$. Indeed, when $n$ is even, we can construct a small $n$-gon with a larger area than $\geo{R}_n$ by adding a vertex at distance $1$ along the mediatrix of an angle in $\geo{R}_{n-1}$. We denote this $n$-gon by $\geo{R}_{n-1}^+$ and we have
\[
A(\geo{R}_{n-1}^+) = \frac{n-1}{2} \left(\sin \frac{\pi}{n-1} - \tan \frac{\pi}{2n-2}\right) + \sin \frac{\pi}{2n-2} - \frac{1}{2}\sin \frac{\pi}{n-1}.
\]

\begin{theorem}[Reinhardt~\cite{reinhardt1922}]
For all $n \ge 3$, let $A_n^*$ denote the maximal area among all small $n$-gons and let $\ub{A}_n := \frac{n}{2}\left(\sin \frac{\pi}{n} - \tan \frac{\pi}{2n}\right)$.
\begin{itemize}
\item When $n$ is odd, $A_n^* = \ub{A}_n$ is only achieved by $\geo{R}_n$.
\item $A_4^* = 1/2 < \ub{A}_4$ is achieved by infinitely many $4$-gons, including $\geo{R}_4$ and~$\geo{R}_3^+$ illustrated in Figure~\ref{figure:4gon}.
\item When $n\ge 6$ is even, $A(\geo{R}_n) < A_n^* < \ub{A}_n$.
\end{itemize}
\end{theorem}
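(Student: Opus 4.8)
The plan is to handle all three regimes through a single upper bound on $A(\geo{P})$ for a small $n$-gon $\geo{P}$, together with a matching construction and an equality analysis. First I would reduce to convex polygons: replacing $\geo{P}$ by the convex hull of its vertices preserves the diameter, does not decrease the area, and uses at most $n$ vertices, so a maximizer may be assumed convex. I would then introduce the diameter graph $D(\geo{P})$ on the $n$ vertices, with an edge for each pair at distance $1$, and collect three classical facts: any two unit chords must cross (otherwise one of the four endpoints would be farther than $1$ from another); $D(\geo{P})$ has at most $n$ edges; and in an area maximizer every vertex is an endpoint of some unit chord, since a vertex all of whose distances are $<1$ could be pushed outward to increase the area without violating the diameter constraint. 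For odd $n$, these facts force $D(\geo{P})$ to be a single cycle through all $n$ vertices, i.e. the skeleton of a star polygon.

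With this structure in hand, I would parametrize the configuration by the angles $\psi_i$ subtended at each vertex by the chord joining its two unit-distance neighbours. The key computation is to show that $A(\geo{P}) = \frac12\sum_{i=1}^n\left(\sin\psi_i - \tan\frac{\psi_i}{2}\right)$ subject to the closing constraint $\sum_{i=1}^n \psi_i = \pi$, the latter expressing that traversing the odd diameter cycle rotates the chord directions by exactly $\pi$. Since $\theta \mapsto \sin\theta - \tan\frac{\theta}{2}$ is strictly concave on $(0,\pi)$, Jensen's inequality gives $A(\geo{P}) \le \frac n2\left(\sin\frac{\pi}{n} - \tan\frac{\pi}{2n}\right) = \ub{A}_n$, with equality if and only if all $\psi_i = \pi/n$, which characterizes the regular polygon $\geo{R}_n$. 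This settles the odd case, including uniqueness, and simultaneously yields the upper bound $A_n^* \le \ub{A}_n$ in general.

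For even $n \ge 6$ the same concave relaxation gives $A_n^* \le \ub{A}_n$, but equality would require the all-equal, odd-cycle configuration, which no even polygon can realize; making this parity obstruction precise---so as to extract a \emph{strict} inequality---is the delicate point and, I expect, the main obstacle of the whole argument, since for even $n$ the diameter graph is no longer a single odd cycle. The lower bound is comparatively easy: the explicit polygon $\geo{R}_{n-1}^+$ constructed above is small and satisfies $A(\geo{R}_{n-1}^+) > A(\geo{R}_{n-1}) > A(\geo{R}_n)$, whence $A(\geo{R}_n) < A_n^*$. Finally, for $n=4$ I would argue directly from the diagonal formula $A = \frac12 d_1 d_2 \sin\psi$ for a convex quadrilateral: both diagonals satisfy $d_1, d_2 \le 1$ and $\sin\psi \le 1$, so $A_4^* \le \frac12 < \ub{A}_4$, with equality exactly for perpendicular unit diagonals---a family realized by the square $\geo{R}_4$, by $\geo{R}_3^+$, and by infinitely many others.
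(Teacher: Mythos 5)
The paper itself does not prove this theorem --- it is quoted from Reinhardt's 1922 work with a citation --- so your attempt can only be judged against the classical argument, whose overall shape you have correctly reproduced: reduction to convex polygons, the diameter graph, the parametrization by apex angles $\psi_i$ with $\sum_i \psi_i = \pi$, strict concavity of $\theta \mapsto \sin\theta - \tan\frac{\theta}{2}$, and Jensen with its equality case. The $n=4$ bullet (area $\le \frac{1}{2}d_1 d_2 \sin\psi \le \frac12$, equality for perpendicular unit diagonals) and the even lower bound $A(\geo{R}_n) < A(\geo{R}_{n-1}) < A(\geo{R}_{n-1}^+) \le A_n^*$ are sound. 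But there are two genuine gaps. The first is the claim that, for odd $n$, your three facts ``force'' the diameter graph of a maximizer to be a single cycle through all $n$ vertices. They do not: the star $K_{1,n-1}$ (all unit chords emanating from one vertex) satisfies every fact you list --- its edges pairwise intersect (intersection at a shared endpoint, which is all your crossing lemma can exclude for non-disjoint pairs), it has at most $n$ edges, and no vertex is isolated --- and so does a shorter odd cycle with pendant unit edges attached. Showing that all such configurations are strictly suboptimal is the substantive core of Reinhardt's proof, not a corollary of the three facts; your argument silently skips it, and with it the uniqueness statement in the first bullet.

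The second gap is worse. For even $n$ you assert that ``the same concave relaxation gives $A_n^* \le \ub{A}_n$,'' but that relaxation does not exist for even $n$: the identity $A = \frac12\sum_i\left(\sin\psi_i - \tan\frac{\psi_i}{2}\right)$ and the constraint $\sum_i \psi_i = \pi$ are derived from a Hamiltonian diameter cycle, and cycles in a diameter graph are necessarily odd (this is precisely the parity obstruction you invoke later), so no small $n$-gon with $n$ even admits this parametrization at all. Hence neither the weak inequality $A_n^* \le \ub{A}_n$ nor the strict one follows from anything you have written; you flag the strictness as ``the main obstacle,'' but in fact the entire even upper bound --- one of the three bullets --- is unproven. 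The classical treatment gets it by a structural case analysis of the maximizer's diameter graph (an odd cycle, possibly a single edge, with pendant edges or trees attached, all edges pairwise intersecting) and shows each admissible structure on an even number of vertices yields area strictly below $\ub{A}_n$; some argument of this kind, handling degree-one vertices explicitly, is what your proof still needs in both places.
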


When $n\ge 6$ is even, the maximal area~$A_n^*$ is known for even $n \le 12$. Using geometric arguments, Graham~\cite{graham1975} determined analytically the largest small $6$-gon, represented in Figure~\ref{figure:6gon:U6}. Its area $A_6^* \approx 0.674981$ is about $3.92\%$ larger than $A(\geo{R}_6) = 3\sqrt{3}/8$. The approach of Graham, combined with methods of global optimization, has been followed by~\cite{audet2002} to determine the largest small $8$-gon, represented in Figure~\ref{figure:8gon:U8}. Its area $A_8^* \approx 0.726868$ is about $2.79\%$ larger than $A(\geo{R}_8) = \sqrt{2}/2$. Henrion and Messine~\cite{henrion2013} found that $A_{10}^* \approx 0.749137$ and $A_{12}^* \approx 0.760730$.

For all even $n\ge 6$, let $\geo{P}_n^*$ denote an optimal small $n$-gon.
\begin{theorem}[Graham~\cite{graham1975}, Foster and Szabo~\cite{foster2007}]
\label{thm:area:diam}
For even $n \ge 6$, the diameter graph of $\geo{P}_n^*$ has a cycle of length $n-1$ and one additional edge from the remaining vertex.
\end{theorem}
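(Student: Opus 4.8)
The plan is to combine the classical combinatorial geometry of planar diameter graphs with a variational argument that exploits the maximality of $\geo{U}_n$. Throughout, write $D$ for the diameter graph of a unit-diameter polygon, with an edge for each pair of vertices at distance exactly $1$. I would first reduce to the convex case: replacing any polygon by its convex hull cannot decrease the area and cannot increase the diameter, so a maximizer $\geo{U}_n$ may be assumed convex, and the edges of $D$ are then chords of a convex $n$-gon. The goal is to show that $D$ is a single cycle on $n-1$ vertices together with one pendant edge.

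First I would establish the combinatorial backbone from a single geometric lemma: any two diameter segments must meet, i.e. share an endpoint or cross. Indeed, if segments $ab$ and $cd$ both have length $1$ and are disjoint, then among $\{a,b,c,d\}$ some pair is at distance strictly greater than $1$, contradicting unit diameter. This Hopf--Pannwitz-type crossing property yields two consequences. On the one hand, a counting argument bounds the number of edges of $D$ by $n$. On the other hand, $D$ contains \emph{no even cycle}: two edges of a cycle that do not share an endpoint are chords of the convex polygon and must therefore cross, which forces the cyclic order of the vertices to alternate in a way that is incompatible with even length. Hence every cycle of $D$ is odd.

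Next I would use optimality to force $D$ to be large and connected. No vertex can be isolated (degree $0$): such a vertex could be pushed outward to strictly increase the area while keeping all pairwise distances at most $1$, contradicting maximality. If $D$ were disconnected, one component could be rotated or translated relative to another to gain area, so $D$ is connected. Finally, $D$ must contain a cycle: a tree (or forest) diameter graph is \emph{flexible}, admitting an area-increasing flex that rotates a subtree about an active diameter. A connected graph on $n$ vertices containing a cycle has at least $n$ edges, and combined with the upper bound of $n$ this gives exactly $n$ edges. Thus $D$ is unicyclic: a single odd cycle $C_k$ with tree appendages, using all $n$ vertices and $n$ edges, so the appendages contribute exactly $n-k$ vertices and $n-k$ edges.

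The crux, and where I expect the real difficulty, is to pin down $k=n-1$ with a single pendant vertex. Parity already helps: since $k$ is odd and $n$ is even we have $k\le n-1$, and it remains to exclude every shorter cycle, i.e. to show $n-k=1$. The guiding intuition is that ``spending'' vertices on the cycle rather than on appendages yields strictly more area, so the optimal configuration lengthens the cycle as far as the odd-parity constraint permits, leaving exactly one vertex joined to $C_{n-1}$ by one edge. Making this rigorous is the hard part: one must show quantitatively that any competing unicyclic diameter configuration---a shorter odd cycle carrying a longer or branched appendage, or several pendant vertices---can be locally reconfigured into a longer-cycle configuration of strictly greater area. This requires a careful analysis of how the area functional responds to redistributing the active diameter constraints, and it goes substantially beyond the routine perturbation and edge-counting arguments of the earlier steps.
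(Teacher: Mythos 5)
Your proposal assembles the correct classical scaffolding --- the Hopf--Pannwitz crossing property for diameter segments, the bound of $n$ on the number of edges, the fact that all cycles in a planar diameter graph are odd, and the perturbation/flexing arguments showing the optimal polygon's diameter graph is connected, has no isolated vertex, and contains a cycle, hence is unicyclic with an odd cycle of length $k \le n-1$. This is essentially Graham's 1975 reduction, and up to that point the outline is sound (modulo care in the flexing step: flexibility of a forest alone does not immediately yield an area-increasing motion; one must actually exhibit a flex along which the area strictly increases, which Graham does). But the theorem you were asked to prove is not this reduction --- it is the statement that $k = n-1$ exactly, and that is precisely the step your proposal leaves as a ``guiding intuition'' about spending vertices on the cycle rather than on appendages. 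Acknowledging that ``making this rigorous is the hard part'' does not discharge it: excluding every shorter odd cycle is the entire content of Foster and Szabo's contribution, and it remained open for over thirty years after Graham's paper precisely because no routine local-reconfiguration or perturbation argument is known to work. A configuration with a short odd cycle and long appendages is not obviously ``locally'' transformable into a longer-cycle configuration of larger area while respecting all unit-distance constraints; the published proof instead proceeds by bounding the area of any polygon whose diameter graph has a cycle of length $k < n-1$ and comparing against an explicit benchmark polygon (such as $\geo{R}_{n-1}^+$) that already beats that bound, a global comparison rather than a local surgery.

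So the verdict is a genuine gap: your write-up proves (in outline) that the diameter graph of $\geo{U}_n$ is a single odd cycle with pendant trees, but the identification of the cycle length as $n-1$ --- the actual statement of Theorem~\ref{thm:area:diam} --- is asserted, not proved. Note also that the paper itself offers no proof to compare against; it cites Graham~\cite{graham1975} and Foster and Szabo~\cite{foster2007} for this result, which is consistent with the fact that the missing step in your argument is a substantial theorem in its own right rather than a finishing detail.
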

\begin{conjecture}
\label{thm:area:sym}
For even $n \ge 6$, $\geo{P}_n^*$ has an axis of symmetry corresponding to the pending edge in its diameter graph.
\end{conjecture}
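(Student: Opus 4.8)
The plan is to combine the combinatorial reduction furnished by Theorem~\ref{thm:area:diam} with a reflection argument. By Theorem~\ref{thm:area:diam} the diameter graph of $\geo{U}_n$ is known: an odd cycle $C_{n-1}$ together with one pendant edge. This prescribes exactly $n$ of the $\binom{n}{2}$ pairwise distances to equal the diameter $1$ (the $n-1$ cycle edges and the pendant), while all remaining distances are $\le 1$. The maximal-area problem then becomes: among $n$-gons realizing this diameter graph, maximize the area (a quadratic form in the vertex coordinates) subject to the $n$ active unit-distance equalities and the inequality constraints on the inactive pairs. Writing $\ell$ for the line supporting the pendant edge, the conjecture asserts that the optimizer is invariant under the reflection $\sigma_\ell$ across $\ell$, which would exhibit $\geo{U}_n$ as the true, ``relaxed'' analogue of the symmetric construction $\geo{R}_{n-1}^+$.

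First I would exploit the fact that $\sigma_\ell(\geo{U}_n)$ is again a small $n$-gon of the same area with the same diameter graph, since a reflection is an isometry and fixes the endpoints of the pendant edge. Hence both $\geo{U}_n$ and its mirror image are global maximizers. If one could show that the maximizer is unique up to rigid motion, then $\geo{U}_n$ and $\sigma_\ell(\geo{U}_n)$ would coincide after an isometry fixing the pendant edge, which forces $\sigma_\ell(\geo{U}_n)=\geo{U}_n$ and proves the claim. The first main step is therefore a uniqueness statement for the optimizer.

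To attack uniqueness I would write the first-order (KKT) conditions. With multipliers $\lambda_{ij}\ge 0$ attached to the active unit-distance pairs, stationarity expresses each vertex as an affine combination of its diameter-graph neighbours, giving a homogeneous linear system $M(\lambda)\,x=0$ governed by a weighted, Laplacian-type operator on the graph $C_{n-1}$ plus pendant. Because this graph is $\sigma_\ell$-equivariant (the reflection permutes the active constraints among themselves), the solution set is $\sigma_\ell$-invariant; the goal is to show the relevant kernel is one-dimensional modulo rigid motions, so that the unique critical configuration is itself fixed by $\sigma_\ell$. Here the oddness of $C_{n-1}$ is essential: a reflection symmetry of an odd cycle must fix exactly one vertex and bisect the opposite edge, and the pendant attaches precisely at that fixed vertex, so the axis $\ell$ is forced to be the vertex-to-opposite-edge axis—mirroring Reinhardt's rigidity for odd regular polygons.

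The hard part, and the reason the statement remains only a conjecture, is exactly this uniqueness/global-optimality step. The maximal-area problem is a nonconvex quadratically constrained quadratic program, with an indefinite quadratic objective and a nonconvex feasible region $\{|v_iv_j|\le 1\}$; neither the objective nor the constraints supply the convexity that a clean symmetrization would require. Averaging $\geo{U}_n$ with its mirror $\sigma_\ell(\geo{U}_n)$ does not obviously help, since the midpoint polygon can both decrease the area and shorten the active diameters, so the usual ``strict concavity plus convex feasibility implies symmetry'' argument breaks down precisely at the active diameters. One would need to certify that no asymmetric configuration attains $A_n^*$, which appears to demand either a complete enumeration of KKT points for every even $n$ or a new rigidity argument tailored to the odd-cycle-plus-pendant structure. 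The sequential convex optimization developed in this paper supplies compelling numerical evidence that the optimizer is symmetric for every tested $n$, but not the uniform certificate the conjecture demands.
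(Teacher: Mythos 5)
The statement you set out to prove is Conjecture~\ref{thm:area:sym}: it is \emph{open}, and the paper contains no proof of it. The paper only records that the case $n=6$ was settled by Yuan~\cite{yuan2004}, and that the known global optima for $n=8,10,12$ together with the numerical solutions produced by Algorithm~\ref{algo:ccp} up to $n=128$ are consistent with the conjecture. So there is no proof in the paper to compare yours against, and your text---which you candidly frame as a strategy rather than a proof---does not close the gap either.

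The gap is exactly where you locate it, but it is worth stating sharply why your reduction does not go through. Your plan is: (i) $\sigma_\ell(\geo{U}_n)$ is again a global maximizer with the same diameter graph; (ii) if the global maximizer were unique up to rigid motion, then $\geo{U}_n$ would admit an indirect self-isometry, hence (being bounded) a genuine reflection symmetry, whose axis is pinned to the pendant edge by the automorphism structure of the odd cycle plus pendant vertex of Theorem~\ref{thm:area:diam}. Step (ii) is sound as conditional reasoning, with two caveats. First, ``rigid motion'' must mean \emph{orientation-preserving} isometry: if reflections are included in the uniqueness statement, then $\sigma_\ell(\geo{U}_n)$ being isometric to $\geo{U}_n$ is a tautology and nothing follows. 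Second, and decisively, the uniqueness hypothesis is not delivered by your KKT sketch: the stationarity system $M(\lambda)\,x=0$ is a statement about first-order (infinitesimal) behavior at one critical point, and a kernel that is one-dimensional modulo rigid motions would at best show that this critical point is nondegenerate or isolated---not that the nonconvex program~\eqref{eq:ngon:area} has a unique critical point, nor that all global maximizers coincide modulo direct isometry. Nonconvex QCQPs of this kind routinely have many KKT points (the paper's algorithm is only guaranteed to converge to \emph{some} critical point), and certifying that no asymmetric KKT point attains $A_n^*$ is precisely the content of the conjecture, not a lemma one can borrow. So your proposal is a reasonable reduction of the symmetry claim to a uniqueness claim, but the uniqueness step is missing, and---as you acknowledge yourself---it is the whole problem.
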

From Theorem~\ref{thm:area:diam}, we note that $\geo{R}_{n-1}^+$ has the same diameter graph as the largest small $n$-gon $\geo{P}_n^*$. Conjecture~\ref{thm:area:sym} is only proven for $n=6$ and this is due to Yuan~\cite{yuan2004}. However, the largest small polygons obtained by~\cite{audet2002} and~\cite{henrion2013} are a further evidence that the conjecture may be true.

\section{Nonconvex quadratically constrained quadratic optimization} \label{sec:nqcqo}
We use cartesian coordinates to describe an $n$-gon $\geo{P}_n$, assuming that a vertex $\geo{v}_i$, $i=0,1,\ldots,n-1$, is positioned at abscissa $x_i$ and ordinate $y_i$. Placing the vertex $\geo{v}_0$ at the origin, we set $x_0 = y_0 = 0$. We also assume that the $n$-gon $\geo{P}_n$ is in the half-plane $y\ge 0$ and the vertices $\geo{v}_i$, $i=1,2,\ldots,n-1$, are arranged in a counterclockwise order as illustrated in Figure~\ref{figure:model}, i.e., $x_iy_{i+1} \ge y_ix_{i+1}$ for all $i=1,2,\ldots,n-2$. The maximal area problem can be formulated as follows
\begin{subequations}\label{eq:ngon:area}
	\begin{align}
	\max_{\rv{x},\rv{y},\rv{u}} \quad & \sum_{i=1}^{n-2} u_i\\
	\subj \quad & (x_j - x_i)^2 + (y_j - y_i)^2 \le 1 &\forall 1\le i < j \le n-1,\label{eq:ngon:d}\\
	& x_i^2 + y_i^2 \le 1 &\forall 1 \le i \le n-1,\label{eq:ngon:r}\\
	& y_i \ge 0 &\forall 1 \le i \le n-1,\label{eq:ngon:y}\\
	& 2u_i \le x_iy_{i+1} - y_ix_{i+1} &\forall 1 \le i \le n-2,\label{eq:ngon:u}\\
	& u_i \ge 0 &\forall 1 \le i \le n-2.
	\end{align}
\end{subequations}
At optimality, for all $i=1,2,\ldots,n-2$, $u_i = (x_iy_{i+1} - y_ix_{i+1})/2$, which corresponds to the area of the triangle $\geo{v}_0\geo{v}_i\geo{v}_{i+1}$.
It is important to note that, unlike what was done in~\cite{audet2002,henrion2013}, this formulation does not make the assumption of Graham's conjecture, nor of the existence of an axis of symmetry.

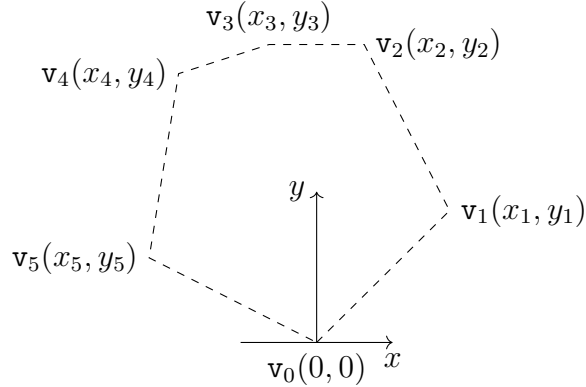
\begin{figure}[h]
\centering
\begin{tikzpicture}[scale=4]
\draw[dashed] (0,0) node[below]{$\geo{v}_0(0,0)$} -- (0.4370,0.4370) node[right]{$\geo{v}_1(x_1,y_1)$} -- (0.1564,0.9877) node[right]{$\geo{v}_2(x_2,y_2)$} -- (-0.1564,0.9877) node[above]{$\geo{v}_3(x_3,y_3)$} -- (-0.4540,0.8911) node[left]{$\geo{v}_4(x_4,y_4)$} -- (-0.5507,0.2806) node[left]{$\geo{v}_5(x_5,y_5)$} -- cycle;
\draw[->] (-0.25,0)--(0.25,0)node[below]{$x$};
\draw[->] (0,0)--(0,0.5)node[left]{$y$};
\end{tikzpicture}
\caption{Definition of variables: Case of $n=6$ vertices}
\label{figure:model}
\end{figure}

Problem~\eqref{eq:ngon:area} is a nonconvex quadratically constrained quadratic optimization problem and can be reformulated as a difference-of-convex optimization (DCO) problem of the form
\begin{subequations}\label{eq:dco}
	\begin{align}
	\max_{\rv{z}} \quad & g_0(\rv{z}) - h_0(\rv{z})\\
	\subj \quad& g_i(\rv{z}) - h_i(\rv{z}) \ge 0 &\forall 1 \le i \le m,
	\end{align}
\end{subequations}
where $g_0,\ldots,g_m$ and $h_0,\ldots,h_m$ are convex quadratic functions. We note that the feasible set
\[
\Omega := \{\rv{z} \colon g_i(\rv{z}) - h_i(\rv{z}) \ge 0, i =1,2,\ldots,m\}
\]
is compact with a nonempty interior, which implies that $g_0(\rv{z}) - h_0(\rv{z}) < \infty$ for all $\rv{z} \in \Omega$.

For a fixed $\rv{c}$, we have $\lb{g}_i(\rv{z};\rv{c}) := g_i(\rv{c}) + \nabla g_i(\rv{c})^T (\rv{z} - \rv{c}) \le g_i(\rv{z})$ for all $i=0,1,\ldots,m$. Then the following problem
\begin{subequations}\label{eq:dcocvx}
	\begin{align}
	\max_{\rv{z}} \quad & \lb{g}_0(\rv{z};\rv{c}) - h_0(\rv{z})\\
	\subj \quad& \lb{g}_i(\rv{z};\rv{c}) - h_i(\rv{z}) \ge 0 &\forall 1 \le i \le m
	\end{align}
\end{subequations}
is a convex restriction of the DCO problem~\eqref{eq:dco} as stated by Proposition~\ref{thm:cvxrestr}. Constraint~\eqref{eq:ngon:u} is equivalent to
\[
(x_i-y_{i+1})^2+(y_i+x_{i+1})^2+8u_i \le (x_i+y_{i+1})^2+(y_i-x_{i+1})^2
\]
for all $i=1,2,\ldots,n-2$. For a fixed $(\rv{a},\rv{b}) \in \R^{n-1} \times \R^{n-1}$, if we replace~\eqref{eq:ngon:u} in~\eqref{eq:ngon:area} by
\[
(x_i-y_{i+1})^2+(y_i+x_{i+1})^2+8u_i \le 2(a_i+b_{i+1})(x_i+y_{i+1})-(a_i+b_{i+1})^2 + 2(b_i-a_{i+1})(y_i-x_{i+1})-(b_i-a_{i+1})^2
\]
for all $i=1,2,\ldots,n-2$, we obtain a convex restriction of the maximal area problem.

\begin{proposition}\label{thm:cvxrestr}
If $\rv{z}$ is a feasible solution of~\eqref{eq:dcocvx} then $\rv{z}$ is a feasible solution of~\eqref{eq:dco}.
\end{proposition}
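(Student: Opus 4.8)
The plan is to exploit the global underestimation property of the linearizations $\lb{g}_i(\rv{z};\rv{c})$, which is already recorded in the sentence preceding the statement: because every $g_i$ is convex and differentiable, the supporting-hyperplane (first-order) inequality gives
\[
\lb{g}_i(\rv{z};\rv{c}) := g_i(\rv{c}) + \nabla g_i(\rv{c})^T(\rv{z}-\rv{c}) \le g_i(\rv{z})
\]
for every $\rv{z}$ and every fixed $\rv{c}$, and for all $i=0,1,\ldots,m$. This single inequality is the engine of the whole argument.

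First I would fix an index $i \in \{1,\ldots,m\}$ and assume $\rv{z}$ is feasible for~\eqref{eq:dcocvx}, so that $\lb{g}_i(\rv{z};\rv{c}) - h_i(\rv{z}) \ge 0$. Adding the underestimation inequality $g_i(\rv{z}) \ge \lb{g}_i(\rv{z};\rv{c})$ then yields the chain
\[
g_i(\rv{z}) - h_i(\rv{z}) \ge \lb{g}_i(\rv{z};\rv{c}) - h_i(\rv{z}) \ge 0.
\]
Since $i$ was arbitrary, $\rv{z}$ satisfies every constraint of~\eqref{eq:dco}, which is precisely feasibility for the DCO problem and completes the proof.

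There is essentially no obstacle here: the result is an immediate consequence of convexity, and the only point requiring care is that the supporting-hyperplane bound must be invoked for each constraint index $i$, uniformly in the fixed point $\rv{c}$. The same reasoning applied to the objective explains why~\eqref{eq:dcocvx} is genuinely a \emph{restriction} rather than a relaxation: its feasible region is contained in $\Omega$, so every solution of~\eqref{eq:dcocvx} is a feasible point of~\eqref{eq:dco}, and hence a bona fide small $n$-gon. This containment is exactly what guarantees that the sequential convex optimization scheme produces feasible polygons at every iteration.
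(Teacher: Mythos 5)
Your proof is correct and follows exactly the same route as the paper's: invoke the supporting-hyperplane inequality $\lb{g}_i(\rv{z};\rv{c}) \le g_i(\rv{z})$ for each constraint index $i$ and chain it with the feasibility inequality of~\eqref{eq:dcocvx} to obtain $g_i(\rv{z}) - h_i(\rv{z}) \ge \lb{g}_i(\rv{z};\rv{c}) - h_i(\rv{z}) \ge 0$. The closing remarks on why this makes~\eqref{eq:dcocvx} a restriction rather than a relaxation are accurate, though not part of the paper's (equally brief) proof.
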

\begin{proof}
Let $\rv{z}$ be a feasible solution of~\eqref{eq:dcocvx}, i.e., $\lb{g}_i(\rv{z};\rv{c}) - h_i(\rv{z}) \ge 0$ for all $i=1,2,\ldots,m$. Then $g_i(\rv{z}) - h_i(\rv{z}) \ge \lb{g}_i(\rv{z};\rv{c}) - h_i(\rv{z}) \ge 0$ for all $i=1,2,\ldots,m$. Thus, $\rv{z}$ is a feasible solution of~\eqref{eq:dco}.
\end{proof}

\begin{proposition}\label{thm:ascent}
If $\rv{c}$ is a feasible solution of~\eqref{eq:dco} then \eqref{eq:dcocvx} is a feasible problem. Moreover, if $\rv{z}^*$ is an optimal solution of~\eqref{eq:dcocvx} then $g_0(\rv{c}) - h_0(\rv{c}) \le g_0(\rv{z}^*) - h_0(\rv{z}^*)$.
\end{proposition}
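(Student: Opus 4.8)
The plan is to exploit two elementary properties of the linearization $\lb{g}_i(\rv{z};\rv{c})=g_i(\rv{c})+\nabla g_i(\rv{c})^T(\rv{z}-\rv{c})$ that are already recorded in the excerpt: it coincides with $g_i$ at the base point, $\lb{g}_i(\rv{c};\rv{c})=g_i(\rv{c})$, and, because each $g_i$ is convex, it is a global lower bound, $\lb{g}_i(\rv{z};\rv{c})\le g_i(\rv{z})$ for all $\rv{z}$. The role of $\rv{c}$ as the point at which \eqref{eq:dcocvx} is constructed is what makes both statements work, so I would keep $\rv{c}$ itself as the candidate feasible point throughout.

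For the first claim I would verify that $\rv{c}$ is feasible for \eqref{eq:dcocvx}. Evaluating the $i$-th constraint of \eqref{eq:dcocvx} at $\rv{z}=\rv{c}$ and using $\lb{g}_i(\rv{c};\rv{c})=g_i(\rv{c})$ gives $\lb{g}_i(\rv{c};\rv{c})-h_i(\rv{c})=g_i(\rv{c})-h_i(\rv{c})$, which is nonnegative for every $i=1,\ldots,m$ precisely because $\rv{c}$ is feasible for \eqref{eq:dco}. Hence $\rv{c}$ satisfies all constraints of \eqref{eq:dcocvx}, so the restricted problem is feasible.

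For the ascent inequality I would chain the two properties together. Since $\rv{c}$ is feasible for \eqref{eq:dcocvx} and $\rv{z}^*$ is optimal for it, optimality yields $\lb{g}_0(\rv{c};\rv{c})-h_0(\rv{c})\le \lb{g}_0(\rv{z}^*;\rv{c})-h_0(\rv{z}^*)$. Rewriting the left-hand side with $\lb{g}_0(\rv{c};\rv{c})=g_0(\rv{c})$ and bounding the right-hand side with $\lb{g}_0(\rv{z}^*;\rv{c})\le g_0(\rv{z}^*)$ gives
\[
g_0(\rv{c})-h_0(\rv{c}) \le \lb{g}_0(\rv{z}^*;\rv{c})-h_0(\rv{z}^*) \le g_0(\rv{z}^*)-h_0(\rv{z}^*),
\]
which is the desired monotonicity of the objective.

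There is no genuine obstacle here; the whole argument rests on the exactness of the linearization at $\rv{c}$ together with its global lower-bound property, and the only point requiring a little care is to apply these two facts to the objective index $i=0$ in the right direction --- exactness on the incumbent side $\rv{c}$ and the lower bound on the optimizer side $\rv{z}^*$ --- so that both inequalities point the same way.
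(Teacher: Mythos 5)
Your proof is correct and follows exactly the same route as the paper's: feasibility of $\rv{c}$ in~\eqref{eq:dcocvx} via the exactness $\lb{g}_i(\rv{c};\rv{c})=g_i(\rv{c})$, followed by the chain $g_0(\rv{c})-h_0(\rv{c})=\lb{g}_0(\rv{c};\rv{c})-h_0(\rv{c})\le\lb{g}_0(\rv{z}^*;\rv{c})-h_0(\rv{z}^*)\le g_0(\rv{z}^*)-h_0(\rv{z}^*)$. No gaps; this matches the paper's argument step for step.
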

\begin{proof}
Let $\rv{c}$ be a feasible solution of~\eqref{eq:dco}, i.e., $g_i(\rv{c}) - h_i(\rv{c}) \ge 0$ for all $i=1,2,\ldots,m$. Then there exists $\rv{z} = \rv{c}$ such that $\lb{g}_i(\rv{c};\rv{c}) - h_i(\rv{c}) = g_i(\rv{c}) - h_i(\rv{c}) \ge 0$ for all $i=1,2,\ldots,m$. Thus, \eqref{eq:dcocvx} is a feasible problem. Moreover, if $\rv{z}^*$ is an optimal solution of~\eqref{eq:dcocvx}, we have $g_0(\rv{c}) - h_0(\rv{c}) = \lb{g}_0(\rv{c};\rv{c}) - h_0(\rv{c}) \le \lb{g}_0(\rv{z}^*;\rv{c}) - h_0(\rv{z}^*) \le g_0(\rv{z}^*) - h_0(\rv{z}^*)$.
\end{proof}

From Proposition~\ref{thm:ascent}, the optimal small $n$-gon $(\rv{x},\rv{y})$ obtained by solving a convex restriction of Problem~\eqref{eq:ngon:area} constructed around a small $n$-gon $(\rv{a},\rv{b})$ has a larger area than this one. Proposition~\ref{thm:local} states that if $(\rv{a},\rv{b})$ is the optimal $n$-gon of the convex restriction constructed around itself, then it is a local optimal $n$-gon for the maximal area problem.

\begin{proposition}\label{thm:local}
Let $\rv{c}$ be a feasible solution of~\eqref{eq:dco}. We suppose that $\lb{\Omega}(\rv{c}) := \{\rv{z}\colon \lb{g}_i(\rv{z};\rv{c}) - h_i(\rv{z}) \ge 0, i =1,2,\ldots,m\}$ satisfies Slater condition. If $\rv{c}$ is an optimal solution of~\eqref{eq:dcocvx} then $\rv{c}$ is a critical point of~\eqref{eq:dco}.
\end{proposition}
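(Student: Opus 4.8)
The plan is to use the fact that the restricted problem~\eqref{eq:dcocvx} is a genuine convex program and then translate its optimality certificate back to~\eqref{eq:dco}. Indeed, the objective $\lb{g}_0(\rv{z};\rv{c}) - h_0(\rv{z})$ is concave, being an affine function minus the convex $h_0$, and each constraint function $\lb{g}_i(\rv{z};\rv{c}) - h_i(\rv{z})$ is concave, so the superlevel set $\{\lb{g}_i(\rv{z};\rv{c}) - h_i(\rv{z}) \ge 0\}$ is convex; hence $\lb{\Omega}(\rv{c})$ is a convex feasible set. Since $\lb{\Omega}(\rv{c})$ satisfies Slater's condition, strong duality holds and the Karush--Kuhn--Tucker (KKT) conditions are necessary for optimality. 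I would therefore begin by recording that, because $\rv{c}$ solves~\eqref{eq:dcocvx}, there exist multipliers $\lambda_i \ge 0$, $i=1,\ldots,m$, for which the stationarity and complementary-slackness conditions of~\eqref{eq:dcocvx} hold at $\rv{c}$.

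The heart of the argument is two elementary identities for the linearization at its base point. First, since $\lb{g}_i(\rv{z};\rv{c}) = g_i(\rv{c}) + \nabla g_i(\rv{c})^T(\rv{z}-\rv{c})$ is affine in $\rv{z}$, its gradient is constant and equal to $\nabla g_i(\rv{c})$; in particular $\nabla_{\rv{z}}\,\lb{g}_i(\rv{c};\rv{c}) = \nabla g_i(\rv{c})$ for every $i=0,1,\ldots,m$. Second, evaluating the linearization at $\rv{z}=\rv{c}$ gives $\lb{g}_i(\rv{c};\rv{c}) = g_i(\rv{c})$. The next step is simply to substitute these two identities into the KKT system of~\eqref{eq:dcocvx} written at $\rv{c}$.

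After this substitution the stationarity condition of~\eqref{eq:dcocvx} reads $\nabla g_0(\rv{c}) - \nabla h_0(\rv{c}) = \sum_{i=1}^m \lambda_i\bigl(\nabla h_i(\rv{c}) - \nabla g_i(\rv{c})\bigr)$, and complementary slackness reads $\lambda_i\bigl(g_i(\rv{c}) - h_i(\rv{c})\bigr) = 0$ for each $i$. These coincide exactly with the stationarity and complementary-slackness conditions of the KKT system for the original problem~\eqref{eq:dco} at $\rv{c}$, carried by the same multipliers $\lambda_i$. Primal feasibility of $\rv{c}$ for~\eqref{eq:dco} is part of the hypothesis, and dual feasibility $\lambda_i \ge 0$ is inherited from the convex KKT certificate. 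Hence $\rv{c}$ satisfies the KKT conditions of~\eqref{eq:dco}, i.e.\ it is a critical point, which is what we wanted.

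I expect the main obstacle to be conceptual rather than computational: the algebra is routine bookkeeping (chiefly keeping signs consistent when passing from the maximization form to a standard minimization Lagrangian), but the argument genuinely relies on Slater's condition, which is what guarantees the \emph{existence} of the multipliers $\lambda_i$ for the convex restriction. Without it, optimality of $\rv{c}$ in~\eqref{eq:dcocvx} would not by itself force the stationarity equation, and the passage to a critical point of~\eqref{eq:dco} would break down. The other point that must be stated carefully is that the two linearization identities make the gradient matching between the two KKT systems \emph{exact}, not merely a first-order approximation, because the gradients are compared precisely at the base point $\rv{c}$ about which the linearization is taken.
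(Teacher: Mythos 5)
Your proposal is correct and follows essentially the same route as the paper's proof: invoke the KKT conditions (made necessary by Slater's condition) for the convex restriction~\eqref{eq:dcocvx} at the optimal point $\rv{c}$, then use the identities $\lb{g}_i(\rv{c};\rv{c}) = g_i(\rv{c})$ and $\nabla \lb{g}_i(\rv{c};\rv{c}) = \nabla g_i(\rv{c})$ to see that the same multipliers certify that $\rv{c}$ is a critical point of~\eqref{eq:dco}. Your write-up is in fact slightly more explicit than the paper's, which leaves the role of Slater's condition and the convexity of $\lb{\Omega}(\rv{c})$ implicit.
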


\begin{proof}
If $\rv{c}$ is an optimal solution of~\eqref{eq:dcocvx} then there exist $m$ scalars $\mu_1, \mu_2,\ldots,\mu_m$ such that
\[
\begin{aligned}
\nabla\lb{g}_0 (\rv{c};\rv{c}) + \sum_{i=1}^m \mu_i\nabla \lb{g}_i(\rv{c};\rv{c}) &= \nabla h_0(\rv{c}) + \sum_{i=1}^m \mu_i\nabla h_i(\rv{c}),\\
\lb{g}_i(\rv{c};\rv{c}) &\ge h_i(\rv{c}) &\forall i=1,2,\ldots,m,\\
\mu_i &\ge 0 &\forall i=1,2,\ldots,m,\\
\mu_i\lb{g}_i(\rv{c};\rv{c}) &= \mu_i h_i(\rv{c}) &\forall i=1,2,\ldots,m.
\end{aligned}
\]
Since $\lb{g}_i (\rv{c};\rv{c}) = g_i(\rv{c})$ and $\nabla \lb{g}_i (\rv{c};\rv{c}) = \nabla g_i(\rv{c})$ for all $i=0,1,\ldots,m$, we conclude that $\rv{c}$ is a critical point of~\eqref{eq:dco}.
\end{proof}

We propose to solve the DCO problem~\eqref{eq:dco} with a sequential convex optimization approach given in Algorithm~\ref{algo:ccp}, also known as concave-convex procedure. A proof of showing that a sequence $\{\rv{z}_k\}_{k=0}^\infty$ generated by Algorithm~\ref{algo:ccp} converges to a critical point $\rv{z}^*$ of the original DCO problem~\eqref{eq:dco} can be found in~\cite{marks1978,sriperumbudur2009}.

\begin{algorithm}
	\caption{Sequential convex optimization}
	\label{algo:ccp}
	\begin{algorithmic}[1]
		\STATE Initialization: choose a feasible solution $\rv{z}_0$ and a stopping criteria $\varepsilon > 0$.
		\STATE $\rv{z}_1 \in \arg\max \{\lb{g}_0(\rv{z};\rv{z}_0) - h_0(\rv{z})\colon \lb{g}_i(\rv{z};\rv{z}_0) - h_i(\rv{z}) \ge 0, i =1,2,\ldots,m\}$
		\STATE $k := 1$
		\WHILE{$\frac{\|\rv{z}_k - \rv{z}_{k-1}\|}{\|\rv{z}_k\|} > \varepsilon$}
		\STATE $\rv{z}_{k+1} \in \arg\max \{\lb{g}_0(\rv{z};\rv{z}_k) - h_0(\rv{z})\colon \lb{g}_i(\rv{z};\rv{z}_k) - h_i(\rv{z}) \ge 0, i =1,2,\ldots,m\}$
		\STATE $k := k+1$
		\ENDWHILE
	\end{algorithmic}
\end{algorithm}

\section{Computational results}\label{sec:results}
Problem~\eqref{eq:ngon:area} was solved in Julia using JuMP~0.22.1 with MOSEK~9.3.13. All the computations were carried out on an \texttt{Intel(R) Core(TM) i7-3540M CPU @ 3.00 GHz} computing platform. Algorithm~\ref{algo:ccp} was implemented as a package: OPTIGON, which is freely available on GitHub~\cite{optigon}. JuMP is a domain-specific modeling language for mathematical optimization embedded in Julia~\cite{dunning2017}.

We chose the following values as initial solution:
\[
\begin{aligned}
a_0 &= 0, &b_0 &= 0,\\
a_i &= \frac{\sin \frac{2i\pi}{n-1}}{2\cos \frac{\pi}{2n-2}} = -a_{n-i}, & b_i &= \frac{1-\cos \frac{2i\pi}{n-1}}{2\cos \frac{\pi}{2n-2}} = b_{n-i} &\forall i=1,\ldots,n/2-1,\\
a_{n/2} &= 0, &b_{n/2} &= 1,
\end{aligned}
\]
which define the $n$-gon $\geo{R}_{n-1}^+$, and the stopping criteria $\varepsilon = 10^{-5}$. At each iteration, we forced MOSEK to solve the dual problem by setting the parameter {\tt MSK\_IPAR\_INTPNT\_SOLVE\_FORM} to {\tt MSK\_SOLVE\_DUAL}. Table~\ref{table:area} shows the areas of the optimal $n$-gons $\hat{\geo{P}}_n$ for even numbers $n=6,8,\ldots,128$, along with the areas of the initial $n$-gons $\geo{R}_{n-1}^+$, the best lower bounds $\lb{A}_n$ found in the literature, and the upper bounds~$\ub{A}_n$. We also report the number $k$ of iterations of Algotithm~\ref{algo:ccp} and the total computation time for each~$n$. The results in Table~\ref{table:area} support the following keypoints:
\begin{enumerate}
\item For $6 \le n\le 12$, $\lb{A}_n - A(\hat{\geo{P}}_n) \le 10^{-8}$, i.e., Algorithm~\ref{algo:ccp} converges to the best known optimal solutions found in the literature.
\item For $32 \le n \le 80$, $\lb{A}_n < A(\geo{R}_{n-1}^+) < A(\hat{\geo{P}}_n)$, i.e., it appears that the solutions obtained by Pint{\'e}r~\cite{pinter2020} are suboptimal.
\item For all $n$, the solutions $\hat{\geo{P}}_n$ obtained with Algorithm~\ref{algo:ccp} verify, within the limit of the numerical computations, Theorem~\ref{thm:area:diam} and Conjecture~\ref{thm:area:sym}, i.e.,
\[
\begin{aligned}
x_{n/2} &=0, &y_{n/2} &=1,\\
\|\geo{v}_{n/2-1}\| &=1, &\|\geo{v}_{n/2+1}\| &=1,\\
\|\geo{v}_{i+n/2}-\geo{v}_i\| &=1, &\|\geo{v}_{i+n/2+1}-\geo{v}_i\| &=1 &\forall i=1,2,\ldots,n/2-2,\\
\|\geo{v}_{n-1}-\geo{v}_{n/2-1}\| &=1,\\
x_{n-i} &=-x_i, &y_{n-i} &= y_i &\forall i=1,2,\ldots,n/2-1.
\end{aligned}
\]
We illustrate the optimal $16$-, $32$- and $64$-gons in Figure~\ref{figure:Un}. Furthermore,  we remark that Theorem~\ref{thm:area:diam} and Conjecture~\ref{thm:area:sym} are verified by each polygon of the sequence generated by Algorithm~\ref{algo:ccp}. All $6$-gons generated by the algorithm are represented in Figure~\ref{figure:ccp:U6} and the coordinates of their vertices are given in Table~\ref{table:ccp:U6}.
\end{enumerate}

%\begin{conjecture}
%For all even $n\ge 6$, the sequence of $n$-gons generated with Algorithm~\ref{algo:ccp} and $\geo{R}_{n-1}^+$ as initial $n$-gon converges to the largest small $n$-gon.
%\end{conjecture}

\begin{table}[t]
	\footnotesize
	\centering
	\caption{Maximal area problem}
	\label{table:area}
	\resizebox{\linewidth}{!}{
%	\resizebox{!}{10cm}{
		\begin{tabular}{@{}rlll|lrr||rlll|lrr@{}}
			\toprule
			$n$ & $A(\geo{R}_{n-1}^+)$ & $\lb{A}_n$ & $\ub{A}_n$ & $A(\hat{\geo{P}}_n)$ & \# iter. $k$	& time [s]	&	$n$ & $A(\geo{R}_{n-1}^+)$ & $\lb{A}_n$ & $\ub{A}_n$ & $A(\hat{\geo{P}}_n)$ & \# iter. $k$	& time [s]	\\
			\midrule
			6	&	0.6722882584	&	0.6749814429~\cite{bieri1961,graham1975,mossinghoff2006b}	&	0.6961524227	&	0.6749814405	&	5	&	0.17	&	68	&	0.7846851407	&	0.7846139029~\cite{pinter2020}	&	0.7846997026	&	0.7846880042	&	83	&	15.02	\\
			8	&	0.7253199909	&	0.7268684828~\cite{audet2002,mossinghoff2006b}	&	0.7350842599	&	0.7268684795	&	10	&	0.26	&	70	&	0.7847256986	&	0.7846403575~\cite{pinter2020}	&	0.7847390429	&	0.7847283062	&	77	&	14.99	\\
			10	&	0.7482573378	&	0.7491373459~\cite{henrion2013,mossinghoff2006b}	&	0.7531627703	&	0.7491373453	&	16	&	0.48	&	72	&	0.7847628920	&	0.7847454020~\cite{pinter2020}	&	0.7847751508	&	0.7847652673	&	72	&	14.02	\\
			12	&	0.7601970055	&	0.7607298734~\cite{henrion2013,mossinghoff2006b}	&	0.7629992851	&	0.7607298714	&	24	&	0.70	&	74	&	0.7847970830	&	0.7845564840~\cite{pinter2020}	&	0.7848083708	&	0.7847992539	&	66	&	13.54	\\
			14	&	0.7671877750	&	0.7675310111~\cite{mossinghoff2006b}	&	0.7689359584	&	0.7675310103	&	33	&	1.00	&	76	&	0.7848285863	&	0.7847585719~\cite{pinter2020}	&	0.7848390031	&	0.7848305842	&	64	&	14.42	\\
			16	&	0.7716285345	&	0.7718613220~\cite{mossinghoff2006b}	&	0.7727913493	&	0.7718613201	&	43	&	1.36	&	78	&	0.7848576763	&	0.7845160579~\cite{pinter2020}	&	0.7848673094	&	0.7848595155	&	61	&	13.91	\\
			18	&	0.7746235089	&	0.7747881651~\cite{mossinghoff2006b}	&	0.7754356273	&	0.7747881607	&	55	&	1.79	&	80	&	0.7848845934	&	0.7848252941~\cite{pinter2020}	&	0.7848935195	&	0.7848862887	&	58	&	15.72	\\
			20	&	0.7767382147	&	0.7768587560~\cite{mossinghoff2006b}	&	0.7773275822	&	0.7768587511	&	68	&	2.57	&	82	&	0.7849095487	&	--	&	0.7849178354	&	0.7849111132	&	55	&	15.15	\\
			22	&	0.7782865351	&	0.7783773308~\cite{pinter2020}	&	0.7787276939	&	0.7783773228	&	81	&	3.05	&	84	&	0.7849327284	&	--	&	0.7849404352	&	0.7849341775	&	53	&	13.97	\\
			24	&	0.7794540033	&	0.7795240461~\cite{pinter2020}	&	0.7797927529	&	0.7795240356	&	95	&	4.06	&	86	&	0.7849542969	&	--	&	0.7849614768	&	0.7849556400	&	51	&	14.11	\\
			26	&	0.7803559816	&	0.7804111201~\cite{pinter2020}	&	0.7806217145	&	0.7804111056	&	109	&	5.28	&	88	&	0.7849744002	&	--	&	0.7849811001	&	0.7849756401	&	47	&	14.31	\\
			28	&	0.7810672517	&	0.7811114192~\cite{pinter2020}	&	0.7812795297	&	0.7811113984	&	122	&	5.34	&	90	&	0.7849931681	&	--	&	0.7849994298	&	0.7849943222	&	46	&	14.16	\\
			30	&	0.7816380102	&	0.7816739255~\cite{pinter2020}	&	0.7818102598	&	0.7816739036	&	136	&	6.83	&	92	&	0.7850107163	&	--	&	0.7850165772	&	0.7850117896	&	44	&	14.28	\\
			32	&	0.7821029651	&	0.7818946320~\cite{pinter2020}	&	0.7822446490	&	0.7821325307	&	148	&	8.39	&	94	&	0.7850271482	&	--	&	0.7850326419	&	0.7850281476	&	42	&	13.97	\\
			34	&	0.7824867354	&	0.7823103007~\cite{pinter2020}	&	0.7826046775	&	0.7825113686	&	159	&	9.28	&	96	&	0.7850425565	&	--	&	0.7850477130	&	0.7850434880	&	40	&	13.59	\\
			36	&	0.7828071755	&	0.7826513767~\cite{pinter2020}	&	0.7829063971	&	0.7828279089	&	169	&	10.20	&	98	&	0.7850570245	&	--	&	0.7850618708	&	0.7850578959	&	39	&	15.02	\\
			38	&	0.7830774889	&	0.7829526627~\cite{pinter2020}	&	0.7831617511	&	0.7830950948	&	177	&	12.30	&	100	&	0.7850706272	&	--	&	0.7850751877	&	0.7850714430	&	38	&	14.98	\\
			40	&	0.7833076096	&	0.7832011589~\cite{pinter2020}	&	0.7833797744	&	0.7833226806	&	182	&	12.85	&	102	&	0.7850834323	&	--	&	0.7850877290	&	0.7850841971	&	37	&	15.81	\\
			42	&	0.7835051276	&	0.7834135187~\cite{pinter2020}	&	0.7835674041	&	0.7835181221	&	185	&	13.68	&	104	&	0.7850955008	&	--	&	0.7850995538	&	0.7850962158	&	35	&	14.92	\\
			44	&	0.7836759223	&	0.7835966860~\cite{pinter2020}	&	0.7837300377	&	0.7836871968	&	185	&	14.53	&	106	&	0.7851068883	&	--	&	0.7851107156	&	0.7851075587	&	34	&	15.03	\\
			46	&	0.7838246055	&	0.7837554636~\cite{pinter2020}	&	0.7838719255	&	0.7838344298	&	178	&	16.10	&	108	&	0.7851176450	&	--	&	0.7851212630	&	0.7851182747	&	33	&	15.31	\\
			48	&	0.7839548353	&	0.7838942710~\cite{pinter2020}	&	0.7839964516	&	0.7839634479	&	171	&	15.91	&	110	&	0.7851278167	&	--	&	0.7851312404	&	0.7851284083	&	32	&	15.28	\\
			50	&	0.7840695435	&	0.7840161496~\cite{pinter2020}	&	0.7841063371	&	0.7840771193	&	159	&	15.57	&	112	&	0.7851374450	&	--	&	0.7851406881	&	0.7851380014	&	31	&	15.00	\\
			52	&	0.7841711020	&	0.7841233641~\cite{pinter2020}	&	0.7842037903	&	0.7841777999	&	147	&	15.30	&	114	&	0.7851465680	&	--	&	0.7851496430	&	0.7851470912	&	30	&	15.08	\\
			54	&	0.7842614465	&	0.7842192995~\cite{pinter2020}	&	0.7842906181	&	0.7842674002	&	138	&	15.93	&	116	&	0.7851552203	&	--	&	0.7851581386	&	0.7851557133	&	29	&	15.91	\\
			56	&	0.7843421691	&	0.7843044654~\cite{pinter2020}	&	0.7843683109	&	0.7843474820	&	129	&	16.23	&	118	&	0.7851634339	&	--	&	0.7851662060	&	0.7851639008	&	29	&	16.91	\\
			58	&	0.7844145892	&	0.7843807534~\cite{pinter2020}	&	0.7844381066	&	0.7844193486	&	121	&	15.83	&	120	&	0.7851712379	&	--	&	0.7851738734	&	0.7851716778	&	28	&	16.30	\\
			60	&	0.7844798073	&	0.7844492943~\cite{pinter2020}	&	0.7845010402	&	0.7844840764	&	110	&	16.07	&	122	&	0.7851786591	&	--	&	0.7851811668	&	0.7851790738	&	27	&	17.02	\\
			62	&	0.7845387477	&	0.7845111362~\cite{pinter2020}	&	0.7845579827	&	0.7845425956	&	103	&	15.93	&	124	&	0.7851857221	&	--	&	0.7851881101	&	0.7851861131	&	26	&	17.13	\\
			64	&	0.7845921910	&	0.7834620877~\cite{pinter2020}	&	0.7846096710	&	0.7845956664	&	95	&	15.71	&	126	&	0.7851924497	&	--	&	0.7851947255	&	0.7851928210	&	26	&	16.63	\\
			66	&	0.7846408000	&	0.7845910589~\cite{pinter2020}	&	0.7846567322	&	0.7846439513	&	89	&	15.20	&	128	&	0.7851988626	&	--	&	0.7852010332	&	0.7851992129	&	25	&	16.36	\\
			\bottomrule
		\end{tabular}
	}
\end{table}

\begin{figure}
	\centering
	\subfloat[$(\hat{\geo{P}}_{16},0.771861)$]{
	\begin{tikzpicture}[scale=5]
	\draw[dashed] (0,0) -- (0.2163,0.0539) -- (0.3801,0.1794) -- (0.4793,0.3573) -- (0.5000,0.5595) -- (0.4390,0.7532) -- (0.3070,0.9060) -- (0.1320,0.9912) -- (0,1) -- (-0.1320,0.9912) -- (-0.3070,0.9060) -- (-0.4390,0.7532) -- (-0.5000,0.5595) -- (-0.4793,0.3573) -- (-0.3801,0.1794) -- (-0.2163,0.0539) -- cycle;
	\draw (0,0)--(0,1);
	\draw (0,0)--(0.1320,0.9912);\draw (0,0)--(-0.1320,0.9912);
	\draw (0.2163,0.0539)--(-0.1320,0.9912);\draw (0.2163,0.0539)--(-0.3070,0.9060);
	\draw (0.3801,0.1794)--(-0.3070,0.9060);\draw (0.3801,0.1794)--(-0.4390,0.7532);
	\draw (0.4793,0.3573)--(-0.4390,0.7532);\draw (0.4793,0.3573)--(-0.5000,0.5595);
	\draw (0.5000,0.5595)--(-0.5000,0.5595);\draw (0.5000,0.5595)--(-0.4793,0.3573);
	\draw (0.4390,0.7532)--(-0.4793,0.3573);\draw (0.4390,0.7532)--(-0.3801,0.1794);
	\draw (0.3070,0.9060)--(-0.3801,0.1794);\draw (0.3070,0.9060)--(-0.2163,0.0539);
	\draw (0.1320,0.9912)--(-0.2163,0.0539);
	\end{tikzpicture}
}
\subfloat[$(\hat{\geo{P}}_{32},0.782133)$]{
	\begin{tikzpicture}[scale=5]
	\draw[dashed] (0,0) -- (0.1083,0.0131) -- (0.2043,0.0450) -- (0.2910,0.0947) -- (0.3661,0.1606) -- (0.4266,0.2401) -- (0.4702,0.3301) -- (0.4950,0.4271) -- (0.5000,0.5271) -- (0.4850,0.6261) -- (0.4507,0.7200) -- (0.3984,0.8052) -- (0.3302,0.8783) -- (0.2491,0.9363) -- (0.1587,0.9768) -- (0.0654,0.9979) -- (0,1) -- (-0.0654,0.9979) -- (-0.1587,0.9768) -- (-0.2491,0.9363) -- (-0.3302,0.8783) -- (-0.3984,0.8052) -- (-0.4507,0.7200) -- (-0.4850,0.6261) -- (-0.5000,0.5271) -- (-0.4950,0.4271) -- (-0.4702,0.3301) -- (-0.4266,0.2401) -- (-0.3661,0.1606) -- (-0.2910,0.0947) -- (-0.2043,0.0450) -- (-0.1083,0.0131) -- cycle;
	\draw (0,0)--(0,1);
	\draw (0,0)--(0.0654,0.9979);\draw (0,0)--(-0.0654,0.9979);
	\draw (0.1083,0.0131)--(-0.0654,0.9979);\draw (0.1083,0.0131)--(-0.1587,0.9768);
	\draw (0.2043,0.0450)--(-0.1587,0.9768);\draw (0.2043,0.0450)--(-0.2491,0.9363);
	\draw (0.2910,0.0947)--(-0.2491,0.9363);\draw (0.2910,0.0947)--(-0.3302,0.8783);
	\draw (0.3661,0.1606)--(-0.3302,0.8783);\draw (0.3661,0.1606)--(-0.3984,0.8052);
	\draw (0.4266,0.2401)--(-0.3984,0.8052);\draw (0.4266,0.2401)--(-0.4507,0.7200);
	\draw (0.4702,0.3301)--(-0.4507,0.7200);\draw (0.4702,0.3301)--(-0.4850,0.6261);
	\draw (0.4950,0.4271)--(-0.4850,0.6261);\draw (0.4950,0.4271)--(-0.5000,0.5271);
	\draw (0.5000,0.5271)--(-0.5000,0.5271);\draw (0.5000,0.5271)--(-0.4950,0.4271);
	\draw (0.4850,0.6261)--(-0.4950,0.4271);\draw (0.4850,0.6261)--(-0.4702,0.3301);
	\draw (0.4507,0.7200)--(-0.4702,0.3301);\draw (0.4507,0.7200)--(-0.4266,0.2401);
	\draw (0.3984,0.8052)--(-0.4266,0.2401);\draw (0.3984,0.8052)--(-0.3661,0.1606);
	\draw (0.3302,0.8783)--(-0.3661,0.1606);\draw (0.3302,0.8783)--(-0.2910,0.0947);
	\draw (0.2491,0.9363)--(-0.2910,0.0947);\draw (0.2491,0.9363)--(-0.2043,0.0450);
	\draw (0.1587,0.9768)--(-0.2043,0.0450);\draw (0.1587,0.9768)--(-0.1083,0.0131);
	\draw (0.0654,0.9979)--(-0.1083,0.0131);
	\end{tikzpicture}
}
\subfloat[$(\hat{\geo{P}}_{64},0.784596)$]{
	\begin{tikzpicture}[scale=5]
	\draw[dashed] (0,0) -- (0.0531,0.0031) -- (0.1018,0.0108) -- (0.1492,0.0231) -- (0.1953,0.0400) -- (0.2398,0.0615) -- (0.2820,0.0874) -- (0.3216,0.1174) -- (0.3581,0.1513) -- (0.3911,0.1887) -- (0.4201,0.2291) -- (0.4451,0.2723) -- (0.4656,0.3178) -- (0.4815,0.3650) -- (0.4926,0.4136) -- (0.4988,0.4631) -- (0.5000,0.5129) -- (0.4963,0.5627) -- (0.4876,0.6118) -- (0.4741,0.6598) -- (0.4559,0.7062) -- (0.4332,0.7506) -- (0.4061,0.7924) -- (0.3750,0.8314) -- (0.3403,0.8670) -- (0.3022,0.8990) -- (0.2612,0.9270) -- (0.2178,0.9507) -- (0.1725,0.9700) -- (0.1257,0.9846) -- (0.0783,0.9944) -- (0.0319,0.9995) -- (0,1) -- (-0.0319,0.9995) -- (-0.0783,0.9944) -- (-0.1257,0.9846) -- (-0.1725,0.9700) -- (-0.2178,0.9507) -- (-0.2612,0.9270) -- (-0.3022,0.8990) -- (-0.3403,0.8670) -- (-0.3750,0.8314) -- (-0.4061,0.7924) -- (-0.4332,0.7506) -- (-0.4559,0.7062) -- (-0.4741,0.6598) -- (-0.4876,0.6118) -- (-0.4963,0.5627) -- (-0.5000,0.5129) -- (-0.4988,0.4631) -- (-0.4926,0.4136) -- (-0.4815,0.3650) -- (-0.4656,0.3178) -- (-0.4451,0.2723) -- (-0.4201,0.2291) -- (-0.3911,0.1887) -- (-0.3581,0.1513) -- (-0.3216,0.1174) -- (-0.2820,0.0874) -- (-0.2398,0.0615) -- (-0.1953,0.0400) -- (-0.1492,0.0231) -- (-0.1018,0.0108) -- (-0.0531,0.0031) -- cycle;
	\draw (0,0)--(0,1);
	\draw (0,0)--(0.0319,0.9995);\draw (0,0)--(-0.0319,0.9995);
	\draw (0.0531,0.0031)--(-0.0319,0.9995);\draw (0.0531,0.0031)--(-0.0783,0.9944);
	\draw (0.1018,0.0108)--(-0.0783,0.9944);\draw (0.1018,0.0108)--(-0.1257,0.9846);
	\draw (0.1492,0.0231)--(-0.1257,0.9846);\draw (0.1492,0.0231)--(-0.1725,0.9700);
	\draw (0.1953,0.0400)--(-0.1725,0.9700);\draw (0.1953,0.0400)--(-0.2178,0.9507);
	\draw (0.2398,0.0615)--(-0.2178,0.9507);\draw (0.2398,0.0615)--(-0.2612,0.9270);
	\draw (0.2820,0.0874)--(-0.2612,0.9270);\draw (0.2820,0.0874)--(-0.3022,0.8990);
	\draw (0.3216,0.1174)--(-0.3022,0.8990);\draw (0.3216,0.1174)--(-0.3403,0.8670);
	\draw (0.3581,0.1513)--(-0.3403,0.8670);\draw (0.3581,0.1513)--(-0.3750,0.8314);
	\draw (0.3911,0.1887)--(-0.3750,0.8314);\draw (0.3911,0.1887)--(-0.4061,0.7924);
	\draw (0.4201,0.2291)--(-0.4061,0.7924);\draw (0.4201,0.2291)--(-0.4332,0.7506);
	\draw (0.4451,0.2723)--(-0.4332,0.7506);\draw (0.4451,0.2723)--(-0.4559,0.7062);
	\draw (0.4656,0.3178)--(-0.4559,0.7062);\draw (0.4656,0.3178)--(-0.4741,0.6598);
	\draw (0.4815,0.3650)--(-0.4741,0.6598);\draw (0.4815,0.3650)--(-0.4876,0.6118);
	\draw (0.4926,0.4136)--(-0.4876,0.6118);\draw (0.4926,0.4136)--(-0.4963,0.5627);
	\draw (0.4988,0.4631)--(-0.4963,0.5627);\draw (0.4988,0.4631)--(-0.5000,0.5129);
	\draw (0.5000,0.5129)--(-0.5000,0.5129);\draw (0.5000,0.5129)--(-0.4988,0.4631);
	\draw (0.4963,0.5627)--(-0.4988,0.4631);\draw (0.4963,0.5627)--(-0.4926,0.4136);
	\draw (0.4876,0.6118)--(-0.4926,0.4136);\draw (0.4876,0.6118)--(-0.4815,0.3650);
	\draw (0.4741,0.6598)--(-0.4815,0.3650);\draw (0.4741,0.6598)--(-0.4656,0.3178);
	\draw (0.4559,0.7062)--(-0.4656,0.3178);\draw (0.4559,0.7062)--(-0.4451,0.2723);
	\draw (0.4332,0.7506)--(-0.4451,0.2723);\draw (0.4332,0.7506)--(-0.4201,0.2291);
	\draw (0.4061,0.7924)--(-0.4201,0.2291);\draw (0.4061,0.7924)--(-0.3911,0.1887);
	\draw (0.3750,0.8314)--(-0.3911,0.1887);\draw (0.3750,0.8314)--(-0.3581,0.1513);
	\draw (0.3403,0.8670)--(-0.3581,0.1513);\draw (0.3403,0.8670)--(-0.3216,0.1174);
	\draw (0.3022,0.8990)--(-0.3216,0.1174);\draw (0.3022,0.8990)--(-0.2820,0.0874);
	\draw (0.2612,0.9270)--(-0.2820,0.0874);\draw (0.2612,0.9270)--(-0.2398,0.0615);
	\draw (0.2178,0.9507)--(-0.2398,0.0615);\draw (0.2178,0.9507)--(-0.1953,0.0400);
	\draw (0.1725,0.9700)--(-0.1953,0.0400);\draw (0.1725,0.9700)--(-0.1492,0.0231);
	\draw (0.1257,0.9846)--(-0.1492,0.0231);\draw (0.1257,0.9846)--(-0.1018,0.0108);
	\draw (0.0783,0.9944)--(-0.1018,0.0108);\draw (0.0783,0.9944)--(-0.0531,0.0031);
	\draw (0.0319,0.9995)--(-0.0531,0.0031);
	\end{tikzpicture}
}
\caption{Three optimal $n$-gons $(\hat{\geo{P}}_n,A(\hat{\geo{P}}_n))$}
\label{figure:Un}
\end{figure}
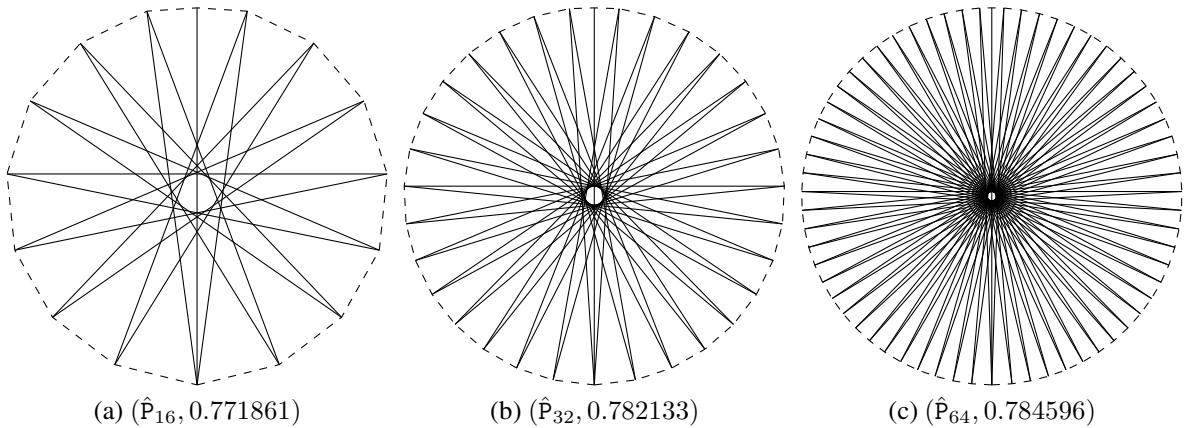

\begin{figure}[h]
	\centering
	\subfloat[$(\geo{R}_5^+,0.672288)$]{
		\begin{tikzpicture}[scale=4]
			\draw[dashed] (0,0) -- (0.5000,0.3633) -- (0.3090,0.9511) -- (0,1) -- (-0.3090,0.9511) -- (-0.5000,0.3633) -- cycle;
			\draw (0,1) -- (0,0) -- (0.3090,0.9511) -- (-0.5000,0.3633) -- (0.5000,0.3633) -- (-0.3090,0.9511) -- (0,0);
		\end{tikzpicture}
	}
\subfloat[$(\geo{P}_6^1,0.674941)$]{
	\begin{tikzpicture}[scale=4]
		\draw[dashed] (0,0) -- (0.5000,0.3975) -- (0.3397,0.9405) -- (0,1) -- (-0.3397,0.9405) -- (-0.5000,0.3975) -- cycle;
		\draw (0,1) -- (0,0) -- (0.3397,0.9405) -- (-0.5000,0.3975) -- (0.5000,0.3975) -- (-0.3397,0.9405) -- (0,0);
	\end{tikzpicture}
}
\subfloat[$(\geo{P}_6^2,0.674981)$]{
	\begin{tikzpicture}[scale=4]
		\draw[dashed] (0,0) -- (0.5000,0.4018) -- (0.3433,0.9392) -- (0,1) -- (-0.3433,0.9392) -- (-0.5000,0.4024) -- cycle;
		\draw (0,1) -- (0,0) -- (0.3433,0.9392) -- (-0.5000,0.4018) -- (0.5000,0.4018) -- (-0.3433,0.9392) -- (0,0);
	\end{tikzpicture}
}\\
	\subfloat[$(\geo{P}_6^3,0.674981)$]{
		\begin{tikzpicture}[scale=4]
			\draw[dashed] (0,0) -- (0.5000,0.4023) -- (0.3437,0.9391) -- (0,1) -- (-0.3437,0.9391) -- (-0.5000,0.4023) -- cycle;
			\draw (0,1) -- (0,0) -- (0.3437,0.9391) -- (-0.5000,0.4023) -- (0.5000,0.4023) -- (-0.3437,0.9391) -- (0,0);
		\end{tikzpicture}
	}
	\subfloat[$(\geo{P}_6^4,0.674981)$]{
		\begin{tikzpicture}[scale=4]
			\draw[dashed] (0,0) -- (0.5000,0.4023) -- (0.3438,0.9391) -- (0,1) -- (-0.3438,0.9391) -- (-0.5000,0.4023) -- cycle;
			\draw (0,1) -- (0,0) -- (0.3438,0.9391) -- (-0.5000,0.4023) -- (0.5000,0.4023) -- (-0.3438,0.9391) -- (0,0);
		\end{tikzpicture}
	}
	\subfloat[$(\geo{P}_6^5,0.674981)$]{
		\begin{tikzpicture}[scale=4]
			\draw[dashed] (0,0) -- (0.5000,0.4024) -- (0.3438,0.9391) -- (0,1) -- (-0.3438,0.9391) -- (-0.5000,0.4024) -- cycle;
			\draw (0,1) -- (0,0) -- (0.3438,0.9391) -- (-0.5000,0.4024) -- (0.5000,0.4024) -- (-0.3438,0.9391) -- (0,0);
		\end{tikzpicture}
	}
	\caption{All $6$-gons $(\geo{P}_6^k,A(\geo{P}_6^k))$ generated by Algorithm~\ref{algo:ccp}}
	\label{figure:ccp:U6}
\end{figure}
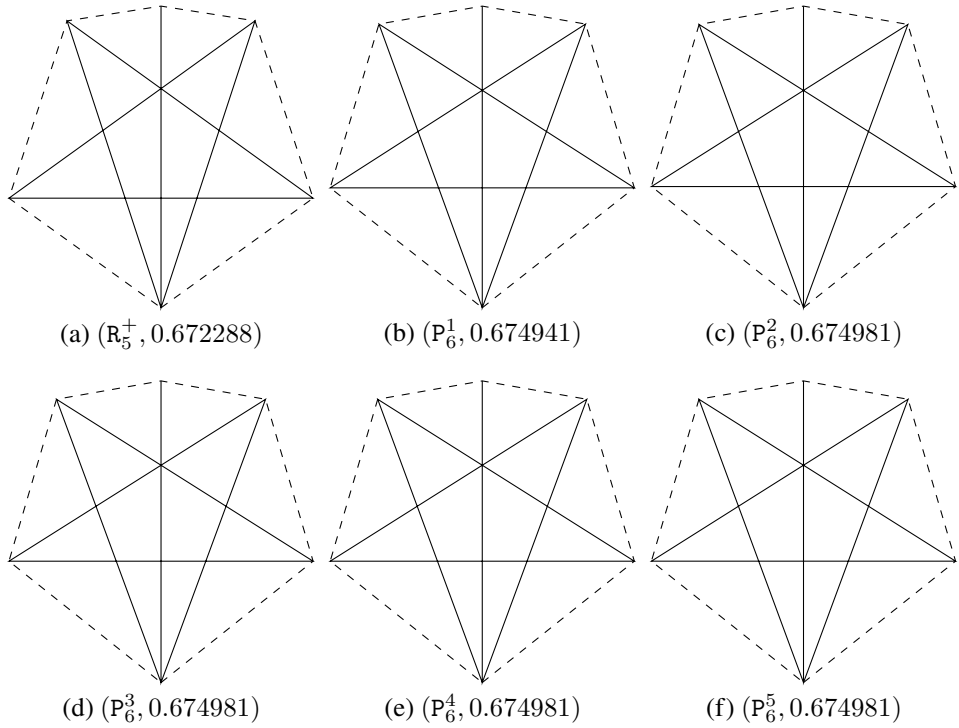

\begin{table}[t]
	\footnotesize
	\centering
	\caption{Vertices of $6$-gons generated by Algorithm~\ref{algo:ccp}}
	\label{table:ccp:U6}
	\resizebox{\linewidth}{!}{
		\begin{tabular}{@{}l|ccccc|l@{}}
			\toprule
			$6$-gon & \multicolumn{5}{c|}{Coordinates $(x_i,y_i)$} & Area \\
			\cmidrule{2-6} & $(x_1,y_1)$ & $(x_2,y_2)$ & $(x_3,y_3)$ &	$(x_4,y_4)$ & $(x_5,y_5)$	&\\
			\midrule
			$\geo{R}_5^+$ & $(0.500000,0.363271)$ & $(0.309017,0.951057)$ & $(0.000000,1.000000)$ &	$(-0.309017,0.951057)$ & $(-0.500000,0.363271)$	&	$0.6722882584$	\\
			$\geo{P}_6^1$ & $(0.500000,0.397446)$ & $(0.339668,0.940545)$ & $(0.000000,1.000000)$ &	$(-0.339668,0.940545)$ & $(-0.500000,0.397446)$	&	$0.6749412362$	\\
			$\geo{P}_6^2$ & $(0.500000,0.401749)$ & $(0.343272,0.939236)$ & $(0.000000,1.000000)$ &	$(-0.343272,0.939236)$ & $(-0.500000,0.401749)$	&	$0.6749808399$	\\
			$\geo{P}_6^3$ & $(0.500000,0.402267)$ & $(0.343702,0.939079)$ & $(0.000000,1.000000)$ &	$(-0.343702,0.939079)$ & $(-0.500000,0.402267)$	&	$0.6749814292$	\\
			$\geo{P}_6^4$ & $(0.500000,0.402329)$ & $(0.343753,0.939060)$ & $(0.000000,1.000000)$ &	$(-0.343753,0.939060)$ & $(-0.500000,0.402329)$	&	$0.6749814401$	\\
			$\geo{P}_6^5$ & $(0.500000,0.402336)$ & $(0.343760,0.939058)$ & $(0.000000,1.000000)$ &	$(-0.343760,0.939058)$ & $(-0.500000,0.402336)$	&	$0.6749814405$	\\
			\bottomrule
		\end{tabular}
	}
\end{table}

\section{Conclusion}\label{sec:conclusion}
We proposed a sequential convex optimization approach to find the largest small $n$-gon for a given even number $n\ge 6$, which is formulated as a nonconvex quadratically constrained quadratic optimization problem. The algorithm, also known as the concave-convex procedure, guarantees convergence to a locally optimal solution.

Without assuming Graham's conjecture nor the existence of an axis of symmetry in our quadratic formulation, numerical experiments on polygons with up to $n=128$ sides showed that each optimal $n$-gon obtained with the algorithm proposed verifies both conditions within the limitation of the numerical computations. Futhermore, for even $6\le n\le 12$, the $n$-gons obtained correspond to the known largest small $n$-gons.

%In future works, we are going to apply this algorithm to other problems on extremal small polygons.

\section*{Acknowledgements}
The author thanks Charles Audet, Professor at Polytechnique Montreal, for helpful discussions on largest small polygons and helpful comments on early drafts of this paper.

\bibliographystyle{ieeetr}
\bibliography{../../research}

\end{document}